\documentclass[12pt,a4paper,reqno]{amsart}
\allowdisplaybreaks
\usepackage{amsmath}
\usepackage{amsfonts}
\usepackage{mathrsfs}
\usepackage{amssymb,amsthm,amsfonts,amsthm,latexsym,enumerate,url,cases}
\numberwithin{equation}{section}
\usepackage{mathrsfs}
\usepackage{hyperref}
\hypersetup{colorlinks=true,citecolor=blue,linkcolor=blue,urlcolor=blue}
     \addtolength{\textwidth}{3 truecm}
     \addtolength{\textheight}{1 truecm}
     \setlength{\voffset}{-.6 truecm}
     \setlength{\hoffset}{-1.3 truecm}

\def\pmod #1{\ ({\rm{mod}}\ #1)}

\theoremstyle{plain}
\newtheorem{theorem}{Theorem}

\newtheorem{problem}{Problem}

\theoremstyle{definition}

\usepackage{etoolbox}
\makeatletter
\patchcmd{\@settitle}{\uppercasenonmath\@title}{}{}{}
\patchcmd{\@setauthors}{\MakeUppercase}{}{}{}
\patchcmd{\section}{\scshape}{}{}{}
\makeatother

\begin{document}

\title
[{A generalization of the Romanoff theorem}]
{A generalization of the Romanoff theorem}

\author
[Y. Ding \quad  and \quad W. Zhai]
{Yuchen Ding \quad {\it and} \quad Wenguang Zhai}

\address{(Yuchen Ding) School of Mathematical Sciences,  Yangzhou University, Yangzhou 225002, People's Republic of China}
\email{ycding@yzu.edu.cn}
\address{(Wenguang Zhai) Department of Mathematics,  China University of Mining and Technology, Beijing 100083, People's Republic of China}
\email{zhaiwg@hotmail.com}

\keywords{Romanoff type problems, primes, powers of $2$}
\subjclass[2010]{11P32, 11A41, 11B13}

\begin{abstract}

Let  $\mathcal{P}$ be the set of primes and $\mathbb{N}$ the set of positive integers.
Let also $r_1,...,r_t$ be positive real numbers and $R_2(r_1,...,r_t)$ the set of odd integers which can be represented as
$$
p+2^{\lfloor k_1^{r_1}\rfloor}+\cdot\cdot\cdot+2^{\lfloor k_t^{r_t}\rfloor},
$$
where $p\in \mathcal{P}$ and $k_1,...,k_t\in\mathbb{N}$. Recently, Chen and Xu proved that the set $R_2(r_1,...,r_t)$ has positive lower asymptotic density, provided that
$r_1^{-1}+\cdot\cdot\cdot+r_t^{-1}\ge 1$
and at least one of $r_1,...,r_t$ is an integer. Their result reduces to the famous theorem of Romanoff by taking $t=r_1=1.$ In this note, we remove the unnecessary condition: `{\it at least one of $r_1,...,r_t$ is an integer}'.

\end{abstract}
\maketitle

\section{Introoduction}
The aim of this note is the following generalization of the classical theorem named after Romanoff \cite{Romanoff}, which states that the set of odd integers whose elements can be written as the sum of a prime and a power of two has positive lower asymptotic density.

\begin{theorem}\label{thm1}
Let $r_1,...,r_t$ be positive real numbers. Then the set
$$
R_2(r_1,...,r_t)=\left\{\text{odd} ~n:n=p+2^{\lfloor k_1^{r_1}\rfloor}+\cdot\cdot\cdot+2^{\lfloor k_t^{r_t}\rfloor},~p\in \mathcal{P},k_1,...,k_t\in\mathbb{N}\right\}
$$
has positive lower asymptotic density if and only if $r_1^{-1}+\cdot\cdot\cdot+r_t^{-1}\ge 1$, where $\mathcal{P}$ and $\mathbb{N}$ are the sets of primes and positive integers, respectively.
\end{theorem}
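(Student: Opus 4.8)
The plan is to run the Cauchy--Schwarz (second moment) method behind Erd\H{o}s's proof of Romanoff's theorem, the one new ingredient being an equidistribution estimate for $\lfloor k^{r}\rfloor$ in residue classes that is valid for \emph{every} real $r>0$. Set $s=r_1^{-1}+\cdots+r_t^{-1}$, let $K_i$ be the largest integer with $2^{\lfloor K_i^{r_i}\rfloor}\le x$ (so $K_i\asymp(\log x)^{1/r_i}$), put $T=\prod_{i=1}^tK_i\asymp(\log x)^{s}$, and for $\mathbf k=(k_1,\dots,k_t)$ with $1\le k_i\le K_i$ write $A(\mathbf k)=\sum_{i=1}^t2^{\lfloor k_i^{r_i}\rfloor}$. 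Let $f(n)=\#\{(p,\mathbf k):p+A(\mathbf k)=n\}$. Since each tuple admits at most $\pi(x)$ primes, $\sum_{n\le x}f(n)\le\pi(x)\,T\ll x(\log x)^{s-1}$, and because $\#\{n\le x:f(n)>0\}\le\sum_{n\le x}f(n)$ this already yields density zero when $s<1$; this is the easy half. For the converse assume $s\ge1$. Restricting to the positive proportion of tuples with $A(\mathbf k)\le x/2$ and to primes $p\le x/2$ gives $\sum_{n\le x}f(n)\gg x(\log x)^{s-1}$, so by Cauchy--Schwarz it suffices to prove $\sum_{n\le x}f(n)^2\ll x(\log x)^{2s-2}$. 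The diagonal $A(\mathbf k)=A(\mathbf k')$ contributes $\ll\pi(x)\,\#\{(\mathbf k,\mathbf k'):A(\mathbf k)=A(\mathbf k')\}$, which is $\ll x(\log x)^{2s-2}$ by an elementary count of coincidences, while the off-diagonal terms are counted by an upper-bound sieve for the prime pairs $p_1-p_2=A(\mathbf k')-A(\mathbf k)$, producing a factor $\ll x(\log x)^{-2}\Sigma$; thus everything reduces to the generalized Romanoff estimate
\[
\Sigma:=\sum_{\substack{\mathbf k,\mathbf k'\\A(\mathbf k)\ne A(\mathbf k')}}\frac{|A(\mathbf k)-A(\mathbf k')|}{\varphi\big(|A(\mathbf k)-A(\mathbf k')|\big)}\ll T^2=(\log x)^{2s}.
\]

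To estimate $\Sigma$ I would insert $\tfrac{m}{\varphi(m)}=\sum_{\delta\mid m}\tfrac{\mu^2(\delta)}{\varphi(\delta)}$ and, after discarding the power of $2$ (a bounded factor), swap the order of summation to obtain
\[
\Sigma\ll\sum_{d\ \mathrm{odd}}\frac{\mu^2(d)}{\varphi(d)}\,C(d),\qquad C(d)=\#\{(\mathbf k,\mathbf k'):A(\mathbf k)\equiv A(\mathbf k')\ (\mathrm{mod}\ d),\ A(\mathbf k)\ne A(\mathbf k')\}.
\]
Writing $\ell=\ell(d)=\mathrm{ord}_d(2)$ and $e_d(u)=e^{2\pi iu/d}$, the residue $A(\mathbf k)\bmod d$ depends only on $(\lfloor k_i^{r_i}\rfloor\bmod\ell)_i$, so orthogonality gives $C(d)=\frac1d\sum_{b\ (\mathrm{mod}\ d)}\prod_i|S_i(b)|^2-P_0$ with $S_i(b)=\sum_{k\le K_i}e_d\big(b\,2^{\lfloor k^{r_i}\rfloor}\big)$ and $P_0$ the number of pairs with equal sums. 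Isolating $b=0$ produces the main term $T^2/d$, which is summed by $\sum_d\mu^2(d)/(d\varphi(d))<\infty$; the crux is to bound the remaining variance so that, after weighting by $\mu^2(d)/\varphi(d)$, the total is $\ll T^2$. The engine for this is Romanoff's classical lemma $\sum_{d\ \mathrm{odd}}\frac{\mu^2(d)}{\varphi(d)\,\ell(d)}<\infty$: granting that each $\lfloor k^{r_i}\rfloor$ is suitably equidistributed modulo $\ell$, the crude additive-energy bound $\#\{(\mathbf c,\mathbf c')\in(\mathbb Z/\ell)^{2t}:\sum 2^{c_i}\equiv\sum 2^{c_i'}\ (\mathrm{mod}\ d)\}\le\ell^{2t-1}$ (fixing all but one coordinate determines the last modulo $\ell$) controls the main part of the variance by $\ll T^2/\ell(d)$, and Romanoff's lemma then closes the estimate.

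The decisive step, and exactly the place where the hypothesis ``some $r_i\in\mathbb Z$'' is removed, is the equidistribution
\[
\#\{k\le K_i:\lfloor k^{r_i}\rfloor\equiv a\ (\mathrm{mod}\ \ell)\}=\frac{K_i}{\ell}+(\text{error}),
\]
with an error whose mean square over $a$ sums against $\mu^2(d)/\varphi(d)$ to $O(T^2)$, uniformly in the modulus $\ell=\ell(d)$. When $r_i$ is an integer, $\lfloor k^{r_i}\rfloor=k^{r_i}$ is exactly periodic modulo $\ell$ and the variance can be computed outright---this is what Chen and Xu exploit. For non-integer $r_i$ one instead detects the congruence through additive characters and is led to Weyl sums $\sum_{k\le K_i}e\big(hk^{r_i}/\ell\big)$ (after expanding the indicator of a union of intervals, equivalently removing the fractional part $\{k^{r_i}\}$, by a Vaaler/Erd\H{o}s--Tur\'an approximation); van der Corput's method supplies a power-saving bound for these sums, uniformly in $\ell\ll\log x$. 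I expect the hard part to be precisely this uniform, power-saving equidistribution for non-integer exponents, together with the bookkeeping that sends the resulting variances, summed against $\mu^2(d)/\varphi(d)$, to $O(T^2)$; the large-modulus range where $\ell(d)$ exceeds a power of $\log x$ is separated off and handled by the near-injectivity of the sums $A(\mathbf k)\bmod d$, again absorbed by Romanoff's lemma.
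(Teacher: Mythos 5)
Your overall strategy coincides with the paper's: Erd\H{o}s's second-moment (Cauchy--Schwarz) argument, an upper-bound sieve for prime pairs reducing everything to a sum over squarefree odd $d$ of $\mu^2(d)/d$ times a congruence count, the Erd\H{o}s--Tur\'an/Romanoff lemma $\sum_{2\nmid d}1/(d\,e_2(d)^{\varepsilon})<\infty$, and a van der Corput equidistribution input for $\lfloor k^{r}\rfloor$ in residue classes modulo $\ell=e_2(d)$. One structural difference: the paper first discards all but $s$ of the summands and thins the $s$-th via $k_s\mapsto\lfloor k_s^{\lambda}\rfloor$ so that the reciprocal sum becomes exactly $1$ and $\mathcal{A}(x)\asymp\log x$; this lets it run the classical bound $\sum_{n\le x}r(n)^2\ll x$ verbatim, whereas you keep all $t$ variables and aim for $\sum_n f(n)^2\ll x(\log x)^{2s-2}$. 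Your variant is workable but adds bookkeeping without gaining anything.

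The problem is that the decisive step is asserted rather than proved, and it is exactly where the whole difficulty of removing the integrality hypothesis lives. Two concrete gaps. First, for $r_1>2$ the modulus $\ell=e_2(d)$ can exceed the number $K_1\asymp(\log x)^{1/r_1}$ of sample points, so ``$\#\{k\le K_1:\lfloor k^{r_1}\rfloor\equiv a\ (\mathrm{mod}\ \ell)\}=K_1/\ell+\text{error}$'' cannot hold with a useful error term there; the paper needs a genuine dichotomy: either each residue class mod $d$ captures at most $(\log x)^{1/r_1}/\log\log x$ values of $k_1$ (in which case the Mertens bound suffices), or a gap argument on consecutive solutions forces $e_2(d)\ll_{r_1}(\log x)^{1-1/r_1}\log\log x$, and only in that restricted range is the exponential-sum machinery deployed. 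Your remark about ``near-injectivity of $A(\mathbf{k})\bmod d$ for large $\ell(d)$'' gestures at this but is not the argument. Second, the van der Corput input is not automatic uniformly in $r_1$: the exponent-pair bound $\sum_{K\le k<2K}\psi(k^{r_1}/\ell+\theta)\ll \ell K^{1-r_1}+\ell^{-\kappa/(1+\kappa)}K^{(r_1\kappa+\lambda)/(1+\kappa)}$ only beats the trivial count when $(r_1\kappa+\lambda)/(1+\kappa)<1$, which fails for the standard pairs (e.g.\ $(1/2,1/2)$ gives $(r_1+1)/3$) once $r_1\ge 2$; the paper must choose the iterated pair $\bigl(\tfrac{1}{4Q-2},\,1-\tfrac{q+1}{4Q-2}\bigr)$ with $q=\lfloor r_1\rfloor+1$ and $Q=2^q$ to make this exponent strictly less than $1$ for every $r_1$. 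Until you supply these two ingredients --- the large-$e_2(d)$ dichotomy and an exponent pair valid for all $r_1$ --- the sketch does not close.
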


This generalization was introduced in a recent article of Chen and Xu \cite{Chen-Xu} but only with an additional condition (which is crucial in their argument): `{\it at least one of $r_1,...,r_t$ is an integer}'.

Before the proof of Theorem \ref{thm1}, We mention the history involving the Romanoff theorem below.
de Polignac \cite{de1} conjectured that every odd number greater than 1 is the sum of a prime and a power of 2. But soon, he \cite{de1} recognized that Euler had already mentioned a counterexample 959 in a letter to Goldbach. Towards the positive direction,
Romanoff \cite{Romanoff} proved that there is a positive proportion of the odd numbers which can be written as the sum of a prime and a power of two. Later, Erd\H os \cite{Er} and van der Corput \cite{va} showed that the odd numbers which can not be represented by the form $p+2^m$ with $p\in\mathcal{P}$ and $m\in \mathbb{N}$ also possess  positive lower asymptotic density. Crocker \cite{Cr} proved that there are infinitely many odd numbers which cannot be represented as $p+2^m+2^n$ with $p\in\mathcal{P}$ and $m,n\in \mathbb{N}$, which was later improved significantly by Chen--Feng--Templier \cite{Ch5} and Pan \cite{Pa1}.

Erd\H os paid much attention to Romanoff's theorem and the proof of it was later simplified by Erd\H os and Tur\'an \cite{Erdos-Turan} and Erd\H os \cite{Erdos}. In \cite{Er}, Erd\H os extended powers of $2$ in the Romanoff theorem to a more general sequence $\{a_k\}$. Precisely, let $a_1<a_2<\cdots$ be an infinite sequence of positive integers satisfying $a_k|a_{k+1}$. Then Erd\H os proved that the necessary and sufficient condition that the sequence $p+a_k$ should have positive density is that
$$
\limsup_{k\rightarrow\infty}\frac{\log a_k}{k}<\infty \quad \text{and} \quad \sum_{d|a_i}\frac{1}{d}<c
$$
for some absolute constant $c$ and any $a_i$.

Following a question of Chen \cite{Chen}, Yang and Chen \cite{Yang-Chen} considered variants of the Romanoff theorem with $c$-condition. A set $\mathcal{B}$ is said to be of $c$-condition if there is some absolute constant $0<c<1$ so that $\mathcal{B}(cx)\gg \mathcal{B}(x)$. Yang and Chen studied the following sumset
$$
\mathcal{S}=\big\{p+b:p\in \mathcal{P},b\in \mathcal{B}\big\}.
$$
They proved that
$$
\frac{x}{\log x}\min\bigg\{\mathcal{B}(x), \frac{\log x}{\log\log x}\bigg\}\ll \mathcal{S}(x)\ll\frac{x}{\log x}\min\big\{\mathcal{B}(x), \log x\big\}.
$$
As an application, Yang and Chen obtained the following estimate
$$
\#\big\{n\le x: n=p+2^{a^2}+2^{b^2},~p\in \mathcal{P},a,b\in \mathbb{N}\big\}\gg \frac{x}{\log\log x}.
$$
Moreover, for any $m$ they constructed a subset $\mathcal{B}$ with $c$-condition so that
$$
\mathcal{B}(x)=\frac{1+o(1)}{m+1}\left(\frac{\log x}{\log\log x}\right)^{m+1}
$$
and $\mathcal{S}(x)\ll x/\log\log x$. These naturally led them to the following two problems.

\begin{problem}\label{p1}
Does there exist a real number $\alpha>0$ and a subset $\mathcal{B}$ of $\mathbb{N}$ with $c$-condition such that $\mathcal{B}(x)\gg x^{\alpha}$ and $\mathcal{S}(x)\ll x/\log\log x$?
\end{problem}

\begin{problem}\label{p2}
Does there exist a positive integer $k$ such that the set of positive integers which can be represented as $p+\sum_{i=1}^{k}2^{m_i^2}$ with $p\in\mathcal{P}$ and $m_i\in \mathbb{N}$ has a positive lower density? If such $k$ exists, what is the minimal value of such $k$?
\end{problem}

In a prior article, the first named author \cite{Ding} completely solved Problem \ref{p2} by showing that $k=2$ is admissible. Thus,
$$
\#\big\{n\le x: n=p+2^{a^2}+2^{b^2},~p\in \mathcal{P},a,b\in \mathbb{N}\big\}\gg x.
$$
In a subsequent article, the first named author \cite{Dingnew} answered Problem \ref{p1} negatively and proved that
for any subset $\mathcal{B}$ of $c$-condition with $\mathcal{B}(x)\gg x^{\alpha}$ we have
$$
\mathcal{S}(x)\gg x/\log\log\log x.
$$
The first named author also pointed out that the above lower bound is sharp via concrete examples. Following the line of Problem \ref{p2}, Chen and Xu \cite{Chen-Xu} obtained Theorem \ref{thm1} with additional condition: `{\it at least one of $r_1,...,r_t$ is an integer}'. And in \cite[Remark 1]{Chen-Xu}, they commented that `{\it We believe that it is unnecessary that at least one of $r_1,...,r_t$ is an integer in our theorem}'.
It is just our purpose to remove this additional condition which forms a clean generalization (Theorem \ref{thm1}) of the Romanoff theorem.

For more variants on the Romanoff theorem, see e.g. Filaseta--Finch--Kozek \cite{FFK}, Pan--Li \cite{Pa3}, Lee \cite{Lee}, Shparlinski--Weingartner \cite{WS},
Elsholtz--Luca--Planitzer \cite{Els}.

\section{Proofs}

Throughout the proof, $\mathcal{S}(x)$ denotes the number of elements of $\mathcal{S}$ not exceeding $x$ and $x$ is supposed to be sufficiently large.
\begin{proof}[Proof of Theorem \ref{thm1}]
One can also easily note that $R_2(r_1,...,r_t)$ is of density zero if $r_1^{-1}+\cdot\cdot\cdot+r_t^{-1}< 1$, making the necessary condition valid. So, we only need to prove the sufficient point.

Suppose first that there exists some $1\le s\le t$ such that $r_s^{-1}\ge 1$, then
$$
\left\{2^{\lfloor k_s^{r_s}\rfloor}:k_s\in\mathbb{N}\right\}=2^{\mathbb{N}}
$$
and hence
$$
\mathcal{P}+2^{\mathbb{N}}\subset R_2(r_1,...,r_t).
$$
In this case, there is nothing to prove for our theorem as it is just the Romanoff theorem.

From now on, we assume that $r_i^{-1}< 1$ for any $1\le i\le t$. So, we would clearly have $t\ge 2$. On observing Chen--Xu's result, we can also assume without loss of generality that none of $r_i$ belongs to integers. The proof begins with the following observation of Chen and Xu \cite[Eq. (4.1)]{Chen-Xu}: There exists an integer $s\ge 2$ such that
$$
\frac{1}{r_1}+\cdot\cdot\cdot+\frac{1}{r_{s-1}}<1\le \frac{1}{r_1}+\cdot\cdot\cdot+\frac{1}{r_{s-1}}+\frac{1}{r_{s}},
$$
from which it follows that there is real number $\lambda\ge 1$ satisfying that
\begin{align}\label{eq18-1}
\frac{1}{r_1}+\cdot\cdot\cdot+\frac{1}{r_{s-1}}+\frac{1}{\lambda r_{s}}=1.
\end{align}
Let
$$
\mathcal{A}=\left\{2^{\lfloor k_1^{r_1}\rfloor}+\cdot\cdot\cdot+2^{\lfloor k_{s-1}^{r_{s-1}}\rfloor}+2^{\left\lfloor \lfloor k_s^\lambda \rfloor^{r_s}\right\rfloor}\right\}.
$$
It suffices to prove that the set $\mathcal{P}+\mathcal{A}$ has positive lower asymptotic density since
$$
\left\{2^{\left\lfloor \lfloor k_s^\lambda \rfloor^{r_s}\right\rfloor}:k_s\in\mathbb{N}\right\} \subseteq\left\{2^{\lfloor k_s^{r_s}\rfloor}:k_s\in\mathbb{N}\right\}.
$$

Denote by $r(n)$ the number of representations of $n=p+a$ with $p\in \mathcal{P}$ and $a\in \mathcal{A}$. We note from Chen and Xu \cite[Pages 77 and 78]{Chen-Xu} that
\begin{align}\label{eq20-1}
\mathcal{A}(x)\asymp \log x.
\end{align}
It is plain that
$$
\sum_{n\le x}r(n)\ge \mathcal{A}(x/2)\mathcal{P}(x/2)\gg \log x(x/\log x) \gg x.
$$
By the Cauchy--Schwarz inequality we have
$$
\bigg(\sum_{n\le x}r(n)\bigg)^2\le \Bigg(\sum_{\substack{n\le x\\ r(n)\ge 1}}1\Bigg) \bigg(\sum_{n\le x}r(n)^2\bigg).
$$
We are going to show that
$$
\sum_{n\le x}r(n)^2\ll x,
$$
from which we would get
$$
\left(\mathcal{P}+\mathcal{A}\right)(x)=\sum_{\substack{n\le x,~r(n)\ge 1}}1\gg x.
$$
Expanding and then exchanging the sum, we obtain
\begin{align*}
\sum_{n\le x}r(n)^2=\sum_{\substack{p_1+a_1=p_2+a_2\le x\\ p_1,p_2\in \mathcal{P},~a_1,a_2\in \mathcal{A}}}1\le \sum_{\substack{a_1,a_2\in \mathcal{A}\\ a_1\le a_2\le x}}\sum_{\substack{p_2\le p_1\le x\\ p_1-p_2=a_2-a_1}}1.
\end{align*}
For the innermost sum in the right--hand side above, we separate it into two cases according to $a_1=a_2$ or not. The contribution of the sum for $a_1=a_2$ can be controlled by
$$
\mathcal{A}(x)\mathcal{P}(x)\ll x
$$
via Eq. (\ref{eq20-1}) and the prime number theorem.
Thus, to complete our proof, it suffices to prove that
\begin{align}\label{eq20-2}
S(x):=\sum_{\substack{a_1,a_2\in \mathcal{A}\\ a_1< a_2\le x}}\sum_{\substack{p_2< p_1\le x\\ p_1-p_2=a_2-a_1}}1\ll x.
\end{align}
For $h\neq0$, let $\pi_2(x,h)$ be the number of prime pairs $p$ and $q$ with $q-p=h$ below $x$. It is well known (see e.g. \cite[Theorem 7.3]{Na}) that
\begin{align}\label{c2}
\pi_2(x,h)\ll\frac{x}{(\log x)^2}\prod_{p|h}\left(1+\frac{1}{p}\right).
\end{align}
Inserting Eq. (\ref{c2}) into Eq. (\ref{eq20-2}) we obtain
\begin{align*}
S(x)\ll \sum_{\substack{a_1,a_2\in \mathcal{A}\\ a_1< a_2\le x}}\frac{x}{(\log x)^2}\prod_{p|a_2-a_1}\left(1+\frac{1}{p}\right).
\end{align*}
We are leading to prove
\begin{align*}
\sum_{\substack{a_1,a_2\in \mathcal{A}\\ a_1< a_2\le x}}\prod_{\substack{p|a_2-a_1\\ p>2}}\left(1+\frac{1}{p}\right)\ll (\log x)^2.
\end{align*}
Following an idea of Elsholtz {\it et al.} \cite{Els}, we note that
$$
\prod_{\substack{p|a_2-a_1\\ p\ge \log x}}\left(1+\frac{1}{p}\right)<\left(1+\frac{1}{\log x}\right)^{\log x/\log 2}\ll 1
$$
since the number of prime factors of $|a_1-a_2|\le x$ is trivially not exceeding $\log x/\log 2$.
Thus, it reduces to show that
\begin{align*}
\sum_{\substack{a_1,a_2\in \mathcal{A}\\ a_1< a_2\le x}}\prod_{\substack{p|a_2-a_1\\ 2<p<\log x}}\left(1+\frac{1}{p}\right)\ll (\log x)^2,
\end{align*}
which is equivalent to
\begin{align}\label{eq27-1}
\sum_{\substack{a_1,a_2\in \mathcal{A}\\ a_1< a_2\le x}}\sum_{\substack{d|a_2-a_1\\ 2\nmid d\\ P^+(d)<\log x}}\frac{\mu^2(d)}{d}\ll (\log x)^2,
\end{align}
where $P^+(d)$ denotes the largest prime factor of $d$ with convention $P^+(1)=1$ and $\mu(d)$ the usual M\"obius function.
On noting the definition of $\mathcal{A}$, it is clear that
\begin{align}\label{eq27-2}
\sum_{\substack{a_1,a_2\in \mathcal{A}\\ a_1< a_2\le x}}\sum_{\substack{d|a_2-a_1\\ 2\nmid d\\ P^+(d)<\log x}}\!\!\!\frac{\mu^2(d)}{d}&\le
\!\!\!\sum_{\substack{k_i,\widetilde{k_i}\le \left(\frac{\log 2x}{\log 2}\right)^{\frac1{r_i}}\\ 1\le i\le s-1}}\sum_{k_s,\widetilde{k_s}\le 2\left(\frac{\log 2x}{\log 2}\right)^{\frac{1}{kr_s}}}\sum_{\substack{d|h_1+\cdot\cdot\cdot+h_{s-1}+h_s-\left(\widetilde{h_1}+\cdot\cdot\cdot+\widetilde{h_{s-1}}+\widetilde{h_s}\right)\\  \widetilde{h_1}+\cdot\cdot\cdot+\widetilde{h_{s-1}}+\widetilde{h_s}<h_1+\cdot\cdot\cdot+h_{s-1}+h_s\le x\\ 2\nmid d,~ P^+(d)<\log x}}\!\!\frac{\mu^2(d)}{d}\nonumber\\
&\ll (\log x)^{2-\frac{1}{r_1}}~\max_{|g|\le x}\sum_{\substack{d\le x,~2\nmid d\\ P^+(d)<\log x}}\frac{\mu^2(d)}{d}\sum_{\substack{k_1\le \left(\frac{\log 2x}{\log 2}\right)^{1/r_1}\\ 2^{\left\lfloor k_1^{r_1}\right\rfloor}\equiv g  \!\pmod{d}}}1,
\end{align}
where the last estimate comes from Eq. (\ref{eq18-1}) and
$$
h_i=2^{\left\lfloor k_i^{r_i}\right\rfloor},  ~~~\widetilde{h_i}=2^{\left\lfloor k_i^{r_i}\right\rfloor}       \quad    (1\le i\le s-1),    \quad      \quad h_s=2^{\left\lfloor \lfloor k_s^\lambda \rfloor^{r_s}\right\rfloor}, ~~~ \widetilde{h_s}=2^{\left\lfloor \left\lfloor \widetilde{k_s}^\lambda \right\rfloor^{r_s}\right\rfloor}.
$$
For some integers $d$, suppose first that the number of $k_1$ satisfying the congruence
\begin{align}\label{eq27-3}
2^{\left\lfloor k_1^{r_1}\right\rfloor}\equiv g  \pmod{d}
\end{align}
is not large than $(\log x)^{1/r_1}/\log\log x$,
then it is admissible for those $d$ since the double sums in right--hand side of Eq. (\ref{eq27-2}) are bounded now by
\begin{align}\label{eq27-bu-1}
\sum_{\substack{d\le x,~2\nmid d\\ P^+(d)<\log x}}\frac{\mu^2(d)}{d}\frac{(\log x)^{1/r_1}}{\log\log x}&\le \frac{(\log x)^{1/r_1}}{\log\log x}\prod_{2<p<\log x}\left(1+\frac{1}{p}\right)\nonumber\\
&=\frac{(\log x)^{1/r_1}}{\log\log x}\exp\left(\sum_{2<p<\log x}\log \left(1+\frac{1}{p}\right)\right)\nonumber\\
&<\frac{(\log x)^{1/r_1}}{\log\log x}\exp\left(\sum_{2<p<\log x}\frac{1}{p}\right)\nonumber\\
&\ll (\log x)^{1/r_1},
\end{align}
where the last estimate follows from the famous theorem of Mertens. We are left over to consider the remaining $d$ for which the number of $k_1$ satisfying Eq. (\ref{eq27-3}) is greater than $(\log x)^{1/r_1}/\log\log x$. In this case, let $k_{1,0}$ be such a solution. Then we have
$$
2^{\left\lfloor k_1^{r_1}\right\rfloor}\equiv 2^{\left\lfloor k_{1,0}^{r_1}\right\rfloor}  \pmod{d}.
$$
So, we would get
\begin{align}\label{eq28-1}
\left\lfloor k_1^{r_1}\right\rfloor\equiv \ell \pmod{e_2(d)}
\end{align}
for some $0\le\ell\le e_2(d)-1$, where $e_2(d)$ is the least positive integer $m$ so that $d|2^m-1$. Furthermore, in case that (\ref{eq28-1}) has more than $(\log x)^{1/r_1}/\log\log x$ solutions of $k_1$, we claim that
\begin{align}\label{key}
e_2(d) \ll_{r_1} (\log x)^{1-\frac{1}{r_1}}\log\log x,
\end{align}
which would be one of the key points in the following arguments.
In fact, suppose that $L$ is the minimal distance between two consecutive solutions $k_1'$ and $k_1'+L$ of the congruence equation (\ref{eq28-1}) with $k_1'>(\log x)^{\frac{1}{2r_1}}$, then
the number of solutions $k_1$ to (\ref{eq28-1}) is not more than
\begin{align*}
(\log x)^{\frac{1}{2r_1}}+\frac{(\log x)^{1/r_1}}{L}+1.
\end{align*}
Hence, by our assumption on the number of the solutions, we have
\begin{align*}
(\log x)^{\frac{1}{2r_1}}+\frac{(\log x)^{1/r_1}}{L}+1\ge \frac{(\log x)^{1/r_1}}{\log\log x},
\end{align*}
which clearly means that
$
L\le 2\log\log x.
$
By the definition of $L$, we also have
\begin{align*}
\lfloor(k_1'+L)^{r_1}\rfloor-\lfloor(k_1')^{r_1}\rfloor\ge e_2(d),
\end{align*}
from which it follows that
\begin{align*}
e_2(d)\le (k_1'+L)^{r_1}-(k_1')^{r_1}+1\ll_{r_1} (k_1')^{r_1-1}L\ll (\log x)^{1-1/r_1}\log\log x.
\end{align*}
We now conclude from (\ref{eq27-2}), (\ref{eq28-1}), (\ref{key}) and the above proofs that we only need to show
\begin{align}\label{additional-5}
\sum_{\substack{d\le x,~2\nmid d\\ P^+(d)<\log x\\e_2(d) \le (\log x)^{1-1/r_1}\log\log x}}\frac{\mu^2(d)}{d}\sum_{\substack{k_1\le \left(\frac{\log 2x}{\log 2}\right)^{1/r_1}\\ \lfloor k_1^{r_1}\rfloor\equiv \ell \pmod{e_2(d)}}}1\ll (\log x)^{\frac{1}{r_1}}.
\end{align}

For any $k_1\le (\log x)^{1/r_1}$ satisfies (\ref{eq28-1}), we clearly have
$$
e_2(d)q+\ell\leq k_1^{r_1}<e_2(d)q+\ell+1
$$
for some nonnegative integer $q$, which is equivalent to
\begin{align}\label{DZ-new-1}
\frac{k_1^{r_1}-\ell-1}{e_2(d)}<q\leq \frac{k_1^{r_1}-\ell}{e_2(d)}.
\end{align}
It is plain that an integer $q$ satisfies (\ref{DZ-new-1}) if and only if
$$
\left\lfloor\frac{k_1^{r_1}-\ell}{e_2(d)}\right\rfloor-\left\lfloor\frac{k_1^{r_1}-\ell-1}{e_2(d)}\right\rfloor=1,
$$
from which it follows that
\begin{align}\label{DZ-new-2}
\sum_{\substack{k_1\le \left(\frac{\log 2x}{\log 2}\right)^{1/r_1}\\ \lfloor k_1^{r_1}\rfloor\equiv \ell \pmod{e_2(d)}}}1&\le \sum_{\substack{k_1\le \left(\frac{\log 2x}{\log 2}\right)^{1/r_1}}}\bigg(\left\lfloor\frac{k_1^{r_1}-\ell}{e_2(d)}\right\rfloor-\left\lfloor\frac{k_1^{r_1}-\ell-1}{e_2(d)}\right\rfloor\bigg)\nonumber\\
&=\sum_{\substack{k_1\le \left(\frac{\log 2x}{\log 2}\right)^{1/r_1}}}\bigg(\frac{1}{e_2(d)}-\psi\left(\frac{k_1^{r_1}-\ell}{e_2(d)}\right)-\psi\left(\frac{k_1^{r_1}-\ell-1}{e_2(d)}\right)\bigg)\nonumber\\
&\ll_{r_1} \frac{(\log x)^{1/r_1}}{e_2(d)}+O\bigg(\max_{|\theta|\le 1}\bigg|\sum_{\substack{k_1\le \left(\frac{\log 2x}{\log 2}\right)^{1/r_1}}}\psi\left(\frac{k_1^{r_1}}{e_2(d)}+\theta\right)\bigg|\bigg),
\end{align}
where $\psi(t)=\{t\}-1/2$ is the sawtooth function.
Now, we write
\begin{align}\label{DZ-new-3}
\sum_{\substack{k_1\le \left(\frac{\log 2x}{\log 2}\right)^{1/r_1}}}\psi\left(\frac{k_1^{r_1}}{e_2(d)}+\theta\right)=S_1+S_2,
\end{align}
where
\begin{align*}
S_1=   \sum_{k\leq e_2(d)^{1/r_1}}\psi\left(\frac{k_1^{r_1}}{e_2(d)}+\theta\right) \quad \text{and} \quad
 S_2=   \sum_{e_2(d)^{1/r_1}<k\leq \left(\frac{\log 2x}{\log 2}\right)^{1/r_1}}\psi\left(\frac{k_1^{r_1}}{e_2(d)}+\theta\right).
\end{align*}
Trivially, we have
\begin{align}\label{DZ-new-4}
S_1   \ll e_2(d)^{1/r_1} .
\end{align}
We are now in a position to use the following van der Corput type bounds \cite[Lemma 4.3]{Graham} for a suitable estimate of $S_2$.

{\bf Lemma 1.} {\it For any real number $t,$ let $\psi(t)=\{t\}-1/2$. Suppose $ K\geq 3$ is a real number, $Y>0.$  Suppose $\alpha>0~ (\alpha\notin \mathbb{Z})$ is a fixed real number. Then for any exponent pair $(\kappa, \lambda) $ we have
\begin{eqnarray*}
\sum_{K\le k<2 K}\psi\left( Y k^{\alpha}+\theta \right) \ll
\frac{K^{1-\alpha}}{Y}+
Y^{\frac{\kappa}{1+\kappa}}K^{\frac{\lambda+\kappa\alpha}{1+\kappa}}.
\end{eqnarray*}
}

For $ e_2(d)^{1/r_1}\le K\le \left(\frac{\log 2x}{\log 2}\right)^{1/r_1}$, by {\bf Lemma 1} with $Y=\frac{1}{e_2(d)} $ we have
\begin{align*}
  \sum_{K\le k_1<2K}\psi\left(\frac{k_1^{r_1}}{e_2(d)}+\theta\right)&= \sum_{K\le k<2K}\psi\left(Yk_1^{r_1}\right)\\
  &
  \ll   \frac{1}{ Y K^{r_1-1}}+ Y^{\frac{\kappa}{1+\kappa}} K^{\frac{r_1\kappa +\lambda}{1+\kappa}}\\
  &
  \ll \frac{e_2(d)}{ K^{r_1-1}}+e_2(d)^{-\frac{\kappa}{1+\kappa}} K^{\frac{r_1\kappa +\lambda}{1+\kappa}}
  \end{align*}
since $Y= 1/e_2(d)$, which combining a splitting argument yields
\begin{align}\label{DZ-new-5}
 S_2= \sum_{e_2(d)^{1/r_1}<k_1\leq \left(\frac{\log 2x}{\log 2}\right)^{1/r_1}}\psi\left(\frac{k^{r_1}}{e_2(d)}+\theta\right)
 \ll      (\log x)^{\frac{1}{r_1}\cdot\frac{r_1\kappa +\lambda}{1+\kappa}}
   e_2(d)^{-\frac{\kappa}{1+\kappa}}+ e_2(d)^{1/r_1}.
\end{align}
Taking (\ref{DZ-new-4}) and (\ref{DZ-new-5}) into (\ref{DZ-new-3}), we have
\begin{align*}
\sum_{\substack{k_1\le \left(\frac{\log 2x}{\log 2}\right)^{1/r_1}\\ \lfloor k_1^{r_1}\rfloor\equiv \ell \pmod{e_2(d)}}}1\ll \frac{(\log x)^{1/r_1}}{e_2(d)}+(\log x)^{\frac{1}{r_1}\cdot\frac{r_1\kappa +\lambda}{1+\kappa}}
   e_2(d)^{-\frac{\kappa}{1+\kappa}}+ e_2(d)^{1/r_1}.
\end{align*}
Therefore, we have
\begin{align}\label{DZ-new-6}
\sum_{\substack{d\le x,~2\nmid d\\ P^+(d)<\log x\\e_2(d) \le (\log x)^{1-1/r_1}\log\log x}}\frac{\mu^2(d)}{d}\sum_{\substack{k_1\le \left(\frac{\log 2x}{\log 2}\right)^{1/r_1}\\ \lfloor k_1^{r_1}\rfloor\equiv \ell \pmod{e_2(d)}}}1\ll_{r_1} \mathcal{W}_1+\mathcal{W}_2+\mathcal{W}_3,
\end{align}
where
$$
\mathcal{W}_1=\sum_{\substack{d\le x,~2\nmid d\\ P^+(d)<\log x\\e_2(d) \le (\log x)^{1-1/r_1}\log\log x}}\frac{\mu^2(d)}{d}\frac{(\log x)^{1/r_1}}{e_2(d)},
$$
$$
\mathcal{W}_2=\sum_{\substack{d\le x,~2\nmid d\\ P^+(d)<\log x\\e_2(d) \le (\log x)^{1-1/r_1}\log\log x}}\frac{\mu^2(d)}{d}(\log x)^{\frac{1}{r_1}\cdot\frac{r_1\kappa +\lambda}{1+\kappa}}
   e_2(d)^{-\frac{\kappa}{1+\kappa}}
$$
and
$$
\mathcal{W}_3=\sum_{\substack{d\le x,~2\nmid d\\ P^+(d)<\log x\\e_2(d) \le (\log x)^{1-1/r_1}\log\log x}}\frac{\mu^2(d)}{d}e_2(d)^{1/r_1}.
$$
By a result of Erd\H{o}s and Tur\'{a}n \cite{ET}, we have
\begin{equation}\label{e17}
\sum_{2\nmid d}\frac{1}{d(e_2(d))^{\varepsilon}}<\infty
\end{equation}
for any $\varepsilon>0$. Using (\ref{e17}), we know that
$$
\mathcal{W}_1\le (\log x)^{1/r_1}\sum_{\substack{2\nmid d}}\frac{1}{de_2(d)}\ll(\log x)^{1/r_1}
$$
and together with the estimate given in (\ref{eq27-bu-1}), we get
\begin{align*}
\mathcal{W}_3&\ll \Big((\log x)^{1-1/r_1}\log\log x\Big)^{1/r_1}\sum_{\substack{d\le x,~2\nmid d\\ P^+(d)<\log x\\e_2(d) \le (\log x)^{1-1/r_1}\log\log x}}\frac{\mu^2(d)}{d}\\
&\ll (\log x)^{\frac{1}{r_1}-\frac{1}{r_1^2}}(\log\log x)^{\frac{1}{r_1}}\sum_{\substack{d\le x,~2\nmid d\\ P^+(d)<\log x }}\frac{\mu^2(d)}{d}\\
&\ll(\log x)^{\frac{1}{r_1}-\frac{1}{r_1^2}}(\log\log x)^{\frac{1}{r_1}+1}\\
&\ll (\log x)^{\frac{1}{r_1}}.
\end{align*}
On noting (\ref{DZ-new-6}), to establish the estimates of (\ref{additional-5}) it remains to show $\mathcal{W}_2\ll (\log x)^{\frac{1}{r_1}}$. Again, by (\ref{e17}) and the estimates in (\ref{eq27-bu-1}), it suffices to choose a suitable exponent pair $(\kappa,\lambda)$ such that
\begin{align}\label{DZ-new_10}
\frac{r_1\kappa +\lambda}{1+\kappa}<1.
\end{align}
For any integer $q\geq 1$, we know that
$$
\left(\frac1{4Q-2},1-\frac{q+1}{4Q-2}\right)
$$
is an exponent pair, thanks to the display of \cite[Page 60]{Graham},
where $Q=2^q$. We now take $q=\lfloor r_1\rfloor+1$ and then
$$
r_1\kappa +\lambda=\frac{r_1}{4Q-2}+1-\frac{q+1}{4Q-2}<1,
$$
from which $(\ref{DZ-new_10})$ clearly follows.

\end{proof}

\section*{Acknowledgments}
We thank Rui-Jing Wang for his interests and pointing out some inaccuracies in the arguments of the former version.

Yuchen Ding is supported by National Natural Science Foundation of China  (Grant No. 12201544), Natural Science Foundation of Jiangsu Province, China (Grant No. BK20210784), China Postdoctoral Science Foundation (Grant No. 2022M710121).

Wenguang Zhai is supported by National Natural Science Foundation of China  (Grant Nos. 12471009, 12301006) and    by  Beijing Natural Science Foundation (Grant No. 1242003).

\end{document}